\documentclass[11pt]{article}

\usepackage[T1]{fontenc}
\usepackage{lmodern}
\usepackage[a4paper,margin=28mm]{geometry}
\usepackage{amsmath,amssymb,amsthm}
\usepackage{microtype}
\usepackage[hidelinks]{hyperref}
\hypersetup{pdftitle={Rigidity and stability for the Bogovskii constant},
  pdfauthor={Glen Wheeler}}
\newtheorem{theorem}{Theorem}
\newtheorem{proposition}[theorem]{Proposition}
\newtheorem{remark}[theorem]{Remark}
\newcommand{\R}{\mathbb R}
\newcommand{\A}{\mathcal A}
\newcommand{\CB}{C_B}
\DeclareMathOperator{\diver}{div}
\allowdisplaybreaks[1]

\title{Rigidity and stability for the Bogovski\u{\i} constant}
\author{Glen Wheeler\thanks{School of Mathematics and Physics,
University of Wollongong, Australia. Email: \texttt{glenw@uow.edu.au}.}}
\date{12 September 2026}

\begin{document}
\maketitle

\begin{abstract}
We prove that balls are the unique minimisers of the Bogovski\u{\i} constant
among bounded connected Lipschitz domains in $\R^n$, resolving Open Problem~2
of Gazzola, Grunau and Sperone. 
\end{abstract}

\section{Introduction}

The optimal constant for the divergence equation links incompressible flow
with classical functional inequalities. Horgan and Payne~\cite{HP}
related the Babu\v{s}ka--Aziz, Friedrichs and Korn inequalities on smooth
simply connected planar domains; Costabel and Dauge~\cite{CD} established
the identity between the Babu\v{s}ka--Aziz and Friedrichs constants on
arbitrary bounded planar domains. More recently, Gazzola, Grunau and
Sperone~\cite{GGS} gave a unified treatment of the divergence equation and
its optimal constants, proving the universal lower bound $C_B\geq n$.
Together with the known value $C_B(B)=n$, this shows that balls are
minimisers. Their Open Problem~2 asks whether they are the only ones.

Let $\Omega\subset\R^n$, $n\geq2$, be a bounded connected Lipschitz domain,
and let $L^2_0(\Omega)=\{f\in L^2(\Omega):\int_\Omega f=0\}$.
Following~\cite{GGS}, we use the squared-norm convention
\begin{equation}\label{eq:definition}
 \CB(\Omega)=
 \sup_{0\ne f\in L^2_0(\Omega)}\;
 \inf_{\substack{v\in H^1_0(\Omega;\R^n)\\\diver v=f}}
 \frac{\int_\Omega|\nabla v|^2}{\int_\Omega f^2}.
\end{equation}
Here $|\nabla v|$ is the Frobenius norm. The divergence equation has a
bounded right inverse on this class of domains, so $\CB(\Omega)<\infty$.
The constant is invariant under translations and dilations.

Torsion already enters the lower-bound proof in~\cite{GGS}. Our key
observation is to prescribe the centred coordinate functions
$f_i=x_i-\bar x_i$ as divergences, where $\bar x$ is the centroid.
Testing the torsion equation $-\Delta w=1$, $w\in H^1_0(\Omega)$,
against the corresponding component of any lift gives
$I(\Omega)\leq n\CB(\Omega)T(\Omega)$, with
$I(\Omega)=\int_\Omega|x-\bar x|^2$ and $T(\Omega)=\int_\Omega w$.
The identity
$I/n^2-T=\int_\Omega|\nabla w+(x-\bar x)/n|^2$ then shows that
$\CB=n$ forces the torsion function to be quadratic; its zero trace
determines a ball. The same estimate, combined with Saint--Venant and
an elementary comparison of radial moments, gives quantitative stability
in terms of the Fraenkel asymmetry
\[
 \A(\Omega)=\inf_{\substack{B\text{ a ball}\\|B|=|\Omega|}}
 \frac{|\Omega\mathbin\triangle B|}{|\Omega|}.
\]
Our main result is the following.

\begin{theorem}\label{thm:main}
For every bounded connected Lipschitz domain $\Omega\subset\R^n$, $n\geq2$,
one has $\CB(\Omega)\geq n$, with equality if and only if $\Omega$ is a ball.
Moreover,
\begin{align}
 \CB(\Omega)-n
 &\geq n\left[
 \left(1+\frac{\A(\Omega)}2\right)^{1+2/n}
 +\left(1-\frac{\A(\Omega)}2\right)^{1+2/n}-2\right]
 \label{eq:profile}\\
 &\geq \frac{n+2}{2n}\,\A(\Omega)^2.\label{eq:stability}
\end{align}
\end{theorem}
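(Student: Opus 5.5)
The plan is to bound $\CB(\Omega)$ from below through the torsion function and the polar moment, then read off both rigidity and stability from the same chain of inequalities. Throughout, $w\in H^1_0(\Omega)$ solves $-\Delta w=1$, $T=\int_\Omega w=\int_\Omega|\nabla w|^2$, and $I=\int_\Omega|x-\bar x|^2$.

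\emph{Step 1: the estimate $I\le n\CB T$.} For each $i$ take $f_i=x_i-\bar x_i\in L^2_0(\Omega)$. Since the infimum in \eqref{eq:definition} is attained (the constraint set is a closed affine subspace), choose $v^{(i)}\in H^1_0$ with $\diver v^{(i)}=f_i$ and $\int|\nabla v^{(i)}|^2\le \CB\int f_i^2$. Integrating by parts against $x_i$ gives $\int_\Omega v^{(i)}_i=-\int_\Omega x_i f_i=-\int_\Omega f_i^2$. Testing the torsion equation with $v^{(i)}_i$ gives $\int v^{(i)}_i=\int\nabla v^{(i)}_i\cdot\nabla w$. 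Hence
\[
\int_\Omega f_i^2\le \|\nabla v^{(i)}\|_2\,\|\nabla w\|_2\le \sqrt{\CB T}\,\Big(\int_\Omega f_i^2\Big)^{1/2}.
\]
This gives $\|f_i\|_2\le\sqrt{\CB T}$. Summing the squares over $i$ yields $I\le n\CB T$.

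\emph{Step 2: $\CB\ge n$ and rigidity.} Expand the square and use $\int\nabla w\cdot(x-\bar x)=-n\int w$:
\[
\int_\Omega\Big|\nabla w+\frac{x-\bar x}{n}\Big|^2=T-2T+\frac{I}{n^2}=\frac{I}{n^2}-T.
\]
So $T\le I/n^2$, and therefore $\CB\ge I/(nT)\ge n$. If $\CB=n$, both inequalities are equalities, so $\nabla w=-(x-\bar x)/n$ a.e. By connectedness, $w=c-|x-\bar x|^2/(2n)$. The zero trace forces $\partial\Omega\subset S=\{|x-\bar x|^2=2nc\}$. Since $\Omega$ is bounded, connected and disjoint from $S$, it lies inside the corresponding ball $B$. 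It then has no relative boundary in the connected set $B$, so $\Omega=B$. The converse is the known value $\CB(B)=n$.

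\emph{Step 3: stability.} Let $B^*$ be the ball with $|B^*|=|\Omega|$. For any ball one has $I(B)=n^2T(B)$ (equality in Step 2). Saint--Venant gives $T(\Omega)\le T(B^*)$, so
\[
\CB-n\ge \frac{I(\Omega)}{nT(\Omega)}-n\ge n\Big(\frac{I(\Omega)}{I(B^*)}-1\Big).
\]
Now compare radial moments about $\bar x$. Let $B$ be the ball centred at $\bar x$ with $|B|=|\Omega|$, and set $m=|\Omega\setminus B|=|B\setminus\Omega|=\tfrac12|\Omega\triangle B|\ge \A|\Omega|/2$.

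By a bathtub (rearrangement) argument, among sets with mass $|B|-m$ inside $B$ and mass $m$ outside, $\int|x-\bar x|^2$ is minimised by a concentric ball of volume $|B|-m$ together with the annulus of volume $m$ just outside $B$. The moment of a centred ball of volume $V$ is proportional to $V^{1+2/n}$. With $p=1+2/n$ and $s=m/|B|\in[\A/2,1]$, this gives
\[
\frac{I(\Omega)}{I(B^*)}\ge (1-s)^{p}+(1+s)^{p}-1 .
\]
The function $g(s)=(1+s)^p+(1-s)^p-2$ is even and convex, hence increasing on $[0,1]$, so $s$ may be replaced by $\A/2$; this gives \eqref{eq:profile}.

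For \eqref{eq:stability}, use $g''(s)=p(p-1)\big[(1+s)^{p-2}+(1-s)^{p-2}\big]$. Since $t\mapsto t^{p-2}$ is convex ($p-2\in(-1,0]$), the bracket is at least $2$. Integrating twice from $g(0)=g'(0)=0$ gives $g(s)\ge p(p-1)s^2$. At $s=\A/2$, multiplying by $n$ gives $n\cdot\frac{n+2}{n}\cdot\frac2n\cdot\frac{\A^2}{4}=\frac{n+2}{2n}\A^2$.

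The main obstacle is the careful justification of the moment comparison in Step 3. One must centre $B$ at $\bar x$ and confirm that the Fraenkel infimum still controls $m$; this holds because $\A$ is an infimum over all centres, so $|\Omega\triangle B|\ge\A|\Omega|$ for this particular $B$. One must also verify the bathtub minimiser when the mass split between the inside and outside of $B$ is fixed. Steps 1–2 are routine once the test functions $f_i$ are chosen.
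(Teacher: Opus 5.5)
Your proposal is correct and follows the paper's proof essentially step for step. It uses the same test divergences $x_i-\bar x_i$ paired with the torsion function, the same defect identity $\int_\Omega|\nabla w+(x-\bar x)/n|^2=I/n^2-T$ for rigidity, and the same Saint--Venant plus shell comparison with $g(s)=(1+s)^p+(1-s)^p-2$ for stability. Two minor points: the claim that $\Omega$ is disjoint from $S$ needs the justification the paper gives, namely the maximum principle ($w>0$ in $\Omega$). Also, picking an attained minimiser in Step~1 is harmless but unnecessary, since the paper's estimate holds for every admissible $v$.
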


The proof of rigidity below uses no boundary differentiability beyond
Lipschitz regularity. We do not claim that the coefficient
in~\eqref{eq:stability} is optimal.

\section*{Acknowledgements}

The author is grateful to Gazzola, Grunau and Sperone for their interest in
this short note, and for encouraging its dissemination.

\section{A moment--torsion estimate and rigidity}

Let $w\in H^1_0(\Omega)$ solve $-\Delta w=1$ weakly. Write
\[
 T=T(\Omega)=\int_\Omega w=\int_\Omega|\nabla w|^2,
 \qquad \bar x=\frac1{|\Omega|}\int_\Omega x,
 \qquad I=I(\Omega)=\int_\Omega|x-\bar x|^2.
\]

\begin{proposition}\label{prop:moment}
With this notation,
\begin{equation}\label{eq:moment}
 \CB(\Omega)\geq\frac{I}{nT}\geq n,
\end{equation}
and
\begin{equation}\label{eq:torsion-defect}
 \int_\Omega\left|\nabla w+\frac{x-\bar x}{n}\right|^2
 =\frac{I}{n^2}-T
 \leq \frac{\CB(\Omega)-n}{n}\,T.
\end{equation}
\end{proposition}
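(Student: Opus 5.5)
The plan is to prescribe the centred coordinate functions as divergences, as the introduction suggests. For each $i=1,\dots,n$, let $f_i=x_i-\bar x_i$, which lies in $L^2_0(\Omega)$ by the choice of centroid. Since $\CB(\Omega)<\infty$ and the admissible set $\{v\in H^1_0:\diver v=f_i\}$ is a nonempty closed affine subspace, there is a minimal-energy lift $v^{(i)}\in H^1_0(\Omega;\R^n)$ with $\diver v^{(i)}=f_i$ and
\[
\int_\Omega|\nabla v^{(i)}|^2\le \CB(\Omega)\int_\Omega f_i^2 .
\]
Actually any lift with energy at most $(\CB+\varepsilon)\int f_i^2$ suffices, followed by $\varepsilon\to0$.

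Next, test the torsion equation against the $i$-th component $v^{(i)}_i\in H^1_0(\Omega)$:
\[
\int_\Omega v^{(i)}_i=\int_\Omega\nabla w\cdot\nabla v^{(i)}_i .
\]
To connect this with $f_i$, integrate by parts differently. Since $\partial_i x_i=1$,
\[
\int_\Omega f_i^2=\int_\Omega f_i\,\diver v^{(i)}=-\int_\Omega \nabla f_i\cdot v^{(i)}=-\int_\Omega v^{(i)}_i .
\]
Combining the two,
\[
\int_\Omega f_i^2=-\int_\Omega\nabla w\cdot\nabla v^{(i)}_i\le T^{1/2}\Bigl(\int_\Omega|\nabla v^{(i)}_i|^2\Bigr)^{1/2}.
\]
Squaring and summing over $i$, using $|\nabla v^{(i)}_i|\le|\nabla v^{(i)}|$, gives
\[
\sum_i\Bigl(\int f_i^2\Bigr)^2\le T\,\CB\sum_i\int f_i^2 .
\]
This does not yet directly give $I\le n\CB T$. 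The cleaner route is to combine the lifts before applying Cauchy--Schwarz. Define $V\in H^1_0$ by $V_j=\sum_i v^{(i)}_i$ placed in a single scalar function $\phi=\sum_i v^{(i)}_i\in H^1_0(\Omega)$. Then $\int_\Omega\phi=-I$ and $|\nabla\phi|^2\le n\sum_i|\nabla v^{(i)}_i|^2$. Hence
\[
I^2\le T\int_\Omega|\nabla\phi|^2\le nT\,\CB\, I,
\]
which is $I\le n\CB T$. This combination step, which gains exactly the factor $n$ rather than losing the structure, is where I expect the care to be needed.

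The remaining statements are an identity and a consequence of it. Expanding the square,
\[
\int_\Omega\Bigl|\nabla w+\frac{x-\bar x}{n}\Bigr|^2=T+\frac2n\int_\Omega\nabla w\cdot(x-\bar x)+\frac{I}{n^2}.
\]
Since $w$ has zero trace, integration by parts gives
\[
\int_\Omega\nabla w\cdot(x-\bar x)=-\int_\Omega w\,\diver(x-\bar x)=-nT,
\]
so the left side equals $I/n^2-T$. This is nonnegative, so $I/(nT)\ge n$, completing \eqref{eq:moment}. Finally, $I\le n\CB T$ gives
\[
\frac{I}{n^2}-T\le\frac{\CB-n}{n}\,T,
\]
which is \eqref{eq:torsion-defect}. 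No boundary regularity beyond $w\in H^1_0$ is used, since every integration by parts pairs an $H^1_0$ function against a smooth one.
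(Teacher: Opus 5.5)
Your proof is correct and is essentially the paper's argument: you use the same test divergences $f_i=x_i-\bar x_i$, the same chain $\int_\Omega f_i^2=-\int_\Omega v_i=-\int_\Omega\nabla w\cdot\nabla v_i$ followed by Cauchy--Schwarz, and the same expansion of the square to get \eqref{eq:torsion-defect}. The detour through $\phi=\sum_i v^{(i)}_i$ is valid but unnecessary. Your per-coordinate bound $m_i^2\le T\,\CB\,m_i$, with $m_i=\int_\Omega f_i^2>0$, already gives $m_i\le\CB T$ after dividing by $m_i$, and summing over $i$ yields $I\le n\CB T$, which is exactly the paper's route.
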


\begin{proof}
For each $i$, put $f_i=x_i-\bar x_i$ and $m_i=\int_\Omega f_i^2>0$.
Every $v\in H^1_0(\Omega;\R^n)$ satisfying $\diver v=f_i$ obeys
\[
 m_i=\int_\Omega f_i\diver v
 =-\int_\Omega v_i
 =-\int_\Omega\nabla w\cdot\nabla v_i.
\]
Thus $m_i^2\leq T\int_\Omega|\nabla v|^2$, and
\[
 \frac{m_i}{T}\leq
 \inf_{\substack{v\in H^1_0(\Omega;\R^n)\\\diver v=f_i}}
 \frac{\int_\Omega|\nabla v|^2}{m_i}
 \leq\CB(\Omega).
\]
Summing $m_i\leq\CB(\Omega)T$ over $i$ yields $I\leq n\CB(\Omega)T$.

Since $w\in H^1_0(\Omega)$, integration by parts gives
$\int_\Omega(x-\bar x)\cdot\nabla w=-nT$. Expanding the square therefore
gives the identity in~\eqref{eq:torsion-defect}. Its nonnegativity gives
$I\geq n^2T$, and its upper bound follows from $I\leq n\CB(\Omega)T$.
\end{proof}

\begin{proof}[Proof of rigidity in Theorem~\ref{thm:main}]
If $\CB(\Omega)=n$, then~\eqref{eq:torsion-defect} gives
$\nabla w=-(x-\bar x)/n$. By connectedness,
\[
 w(x)=\frac{R^2-|x-\bar x|^2}{2n}\quad\text{in }\Omega
\]
for some $R>0$. Indeed, the strong maximum principle gives $w>0$ in
$\Omega$, hence $\Omega\subset B_R(\bar x)$.
The polynomial representative has zero trace on $\partial\Omega$.
Continuity and the positive surface measure of every Lipschitz boundary
patch imply that it vanishes at every boundary point. Thus
$\partial\Omega\subset\partial B_R(\bar x)$, and $\Omega$ is a nonempty
relatively open and closed subset of $B_R(\bar x)$. It is thus the whole ball.
The converse is the known identity $\CB(B)=n$;
see~\cite[Section~3.1]{GGS}.
\end{proof}

\section{Quantitative stability}

\begin{proof}[Proof of \eqref{eq:profile}--\eqref{eq:stability}]
Let $V=|\Omega|$ and choose $B=B_R(\bar x)$ with $|B|=V$. Direct integration
of $w_B=(R^2-|x-\bar x|^2)/(2n)$ gives
\begin{equation}\label{eq:ball}
 I(B)=\frac{n}{n+2}VR^2,
 \qquad T(B)=\frac{VR^2}{n(n+2)}=\frac{I(B)}{n^2}.
\end{equation}
We use the classical Saint--Venant inequality $T(\Omega)\leq T(B)$.
For completeness, the torsion equation and Cauchy--Schwarz give
\[
 T(\Omega)=\sup_{0\ne\varphi\in H^1_0(\Omega)}
 \frac{\bigl(\int_\Omega\varphi\bigr)^2}
 {\int_\Omega|\nabla\varphi|^2}.
\]
One may take $\varphi\geq0$; Schwarz rearrangement onto $B$ preserves its
integral and does not increase its Dirichlet energy. This proves the
inequality. 

Set $s=|\Omega\setminus B|/V=|B\setminus\Omega|/V\in[0,1]$ and
$p=1+2/n$. Among sets of volume $sV$ outside $B$, the radial moment is
smallest on the shell from radius $R$ to $R(1+s)^{1/n}$. Among sets of
volume $sV$ inside $B$, it is largest on the shell from radius
$R(1-s)^{1/n}$ to $R$. Consequently,
\begin{equation}\label{eq:shells}
 I(\Omega)-I(B)\geq I(B)g(s),
 \qquad g(s)=(1+s)^p+(1-s)^p-2.
\end{equation}
This comparison follows directly by exchanging mass across a sphere:
the weight $|x-\bar x|^2$ increases with the radius. Integrating that
weight over the two shells gives~\eqref{eq:shells}.

By~\eqref{eq:moment}, Saint--Venant and~\eqref{eq:ball}--\eqref{eq:shells},
\[
 \CB(\Omega)\geq\frac{I(\Omega)}{nT(\Omega)}
 \geq n\frac{I(\Omega)}{I(B)}
 \geq n\bigl(1+g(s)\bigr).
\]
Now $2s\geq\A(\Omega)$ and $g$ is increasing on $[0,1]$,
which proves~\eqref{eq:profile}.
Finally $g(0)=g'(0)=0$ and, for $0\leq s<1$,
\[
 g''(s)=p(p-1)\bigl[(1+s)^{p-2}+(1-s)^{p-2}\bigr]
 \geq2p(p-1).
\]
Here $1<p\leq2$, so $t\mapsto t^{p-2}$ is convex on $(0,\infty)$.
Twice integrating, and using continuity at $s=1$, gives
$g(s)\geq p(p-1)s^2$. Since $np(p-1)/4=(n+2)/(2n)$,
\eqref{eq:stability} follows.
\end{proof}

\begin{remark}[Related inequalities]
With the Dirichlet-energy dual norm
\[
 \|\nabla f\|_{-1}
 =\sup_{0\ne v\in H^1_0(\Omega;\R^n)}
 \frac{|\int_\Omega f\diver v|}{\|\nabla v\|_2},
\]
the best constant in the Ne\v{c}as inequality
$\|f\|_2^2\leq C\|\nabla f\|_{-1}^2$, $f\in L^2_0(\Omega)$,
is $C=\CB(\Omega)$~\cite[Proposition~2.11]{GGS}.
Theorem~\ref{thm:main} therefore gives the same rigidity and stability
for this constant. In dimension two, let $\Gamma(\Omega)$ be the best
constant in $\int_\Omega h^2\leq\Gamma(\Omega)\int_\Omega k^2$ for
$h+ik$ holomorphic with $h,k\in L^2(\Omega)$ and
$\int_\Omega h=\int_\Omega k=0$.
The identity $\CB=1+\Gamma$ of Costabel and
Dauge~\cite[Theorem~2.1]{CD} gives
\[
 \Gamma(\Omega)-1\geq\A(\Omega)^2,
 \qquad \Gamma(\Omega)=1\ \Longleftrightarrow\ \Omega\text{ is a disk}.
\]
\end{remark}

\begin{remark}[Attainment on smooth domains]
The condition $\CB(\Omega)>2$ can be removed from the attainment assertion
of~\cite[Theorem~2.15]{GGS}, retaining its $C^4$ boundary hypothesis.
The only additional case is $n=2$, $\CB=2$, hence a disk. There the fields
$w e_i$ minimise energy for their prescribed divergences and have quotient
$2$: they are energy-orthogonal to every zero-divergence field, since
$\int_\Omega v_i=-\int_\Omega x_i\diver v=0$ for such fields.
\end{remark}

\end{document}